\documentclass[11pt]{amsart}

\usepackage[a4paper,margin=29mm]{geometry}
\usepackage{amsmath,amssymb,mathtools}
\usepackage{enumitem}
\usepackage{microtype}
\usepackage[colorlinks=true,linkcolor=blue,citecolor=blue,urlcolor=blue]{hyperref}
\usepackage[nameinlink,noabbrev]{cleveref}

\newcommand{\T}{\mathbb T}
\newcommand{\R}{\mathbb R}
\newcommand{\Z}{\mathbb Z}
\newcommand{\N}{\mathbb N}
\newcommand{\Q}{\mathbb Q}
\newcommand{\Diff}{\operatorname{Diff}}
\newcommand{\Homeo}{\operatorname{Homeo}}
\newcommand{\Leb}{\operatorname{Leb}}
\newcommand{\e}{\mathrm e}
\newcommand{\dd}{\,\mathrm d}
\newcommand{\cE}{\mathcal E}

\theoremstyle{plain}
\newtheorem{theorem}{Theorem}[section]
\newtheorem{proposition}[theorem]{Proposition}
\newtheorem{lemma}[theorem]{Lemma}

\theoremstyle{definition}

\theoremstyle{remark}

\title[Examples beyond bounded mean motion]
{Examples beyond Bounded Mean Motion\
for Quantitative Rigidity on the Two-Torus}

\author{Yinshan Chang}
\author{Jian Wang}
\author{Junchang Zhou}

\hypersetup{
  pdftitle={Examples beyond Bounded Mean Motion for Quantitative Rigidity on the Two-Torus},
  pdfauthor={Yinshan Chang, Jian Wang, and Junchang Zhou},
  pdfsubject={Semi-irrational and totally irrational area-preserving pseudo-rotations with quantitative deviation but without bounded mean motion}
}

\subjclass[2020]{Primary 37E30, 37C40; Secondary 37A05, 37E35}
\keywords{pseudo-rotation, bounded mean motion, quantitative deviation,
Anosov--Katok method, special flow, Liouville rotation}

\begin{document}

\begin{abstract}
This note supplies genuinely non-fibred examples for the manuscript
\emph{Rigidity on the Two-Torus and Sarnak's Conjecture}.  For every
$0<\delta<\tfrac12$, we construct $C^\infty$ Lebesgue-area-preserving
pseudo-rotations of $\T^2$ which satisfy the $(C,\delta)$-deviation
condition but do not have bounded mean motion.  We give both
semi-irrational and totally irrational rotation vectors and two
realizations: a controlled weakly mixing Anosov--Katok construction and an
explicit weakly mixing special-flow construction.  Weak mixing is used as
a conjugacy-invariant obstruction to every continuous circle-rotation
factor.  Consequently, none of the resulting maps is topologically
conjugate, by a linear or nonlinear change of coordinates, to a skew
product over a circle rotation.  In the special-flow realization, the
same lacunary Fourier series simultaneously gives weak mixing, the sharp
upper bound $O(n^\delta)$, and unbounded deviations; in fact no smaller
deviation exponent is possible.  The semi-irrational examples meet the
assumptions of Theorems~1 and~2 of the cited manuscript, whereas the
totally irrational examples meet those of Theorem~1.  Each construction
produces continuum many maps and continuum many topological conjugacy
classes of each rotation type.
\end{abstract}

\maketitle

\section{Purpose and main theorem}

Let $f\in\Homeo_0(\T^2)$ and let $F\colon\R^2\to\R^2$ be a lift.  If
the Misiurewicz--Ziemian rotation set \cite{MZ89} is a singleton, we call
$f$ a \emph{pseudo-rotation}.  Write the singleton as $\{\rho(F)\}$ and
set
\begin{equation}\label{eq:deviation-cocycle}
 D_n(F,z)=F^n(z)-z-n\rho(F).
\end{equation}
Bounded mean motion is the condition
\begin{equation}\label{eq:BMM}
 \sup_{n\geq1}\sup_{z\in\R^2}|D_n(F,z)|<\infty.
\end{equation}
The $(C,\delta)$-deviation condition of \cite{CWZ} is equivalent to
\begin{equation}\label{eq:Cdelta}
 \sup_{z\in\R^2}|D_n(F,z)|\leq Cn^\delta
 \qquad(n\geq1).
\end{equation}
Indeed, one takes $N=n$ in Definition~1.1 of \cite{CWZ}; conversely,
division by $n$ and $n^{\delta-1}\leq N^{\delta-1}$ for $n\geq N$
recover that definition.  Thus $\delta=0$ is bounded mean motion up to
the value of the constant, whereas every $\delta>0$ permits unbounded
sublinear displacement.

By a \emph{skew product over a circle rotation} we mean a map of the
form
\begin{equation}\label{eq:skew-product}
 S(x,y)=(x+\gamma,\Phi_x(y))
\end{equation}
on a circle bundle over $\T$, including the additive skew products
considered in \cite{CWZ}.  Saying that $f$ is not conjugate to a skew
product means that no torus homeomorphism, with arbitrary homotopy class
and no regularity or measure-preservation assumption, conjugates $f$ to
a map of the form \eqref{eq:skew-product}.

\begin{theorem}\label{thm:main}
Fix $0<\delta<\tfrac12$.  Both the Anosov--Katok method and the
special-flow method give a semi-irrational family and a totally
irrational family in $\Diff_0^\infty(\T^2)$ with the following
properties.
\begin{enumerate}[label=\textup{(\roman*)}]
 \item Every map preserves Lebesgue area and is weakly mixing with
       respect to Lebesgue measure.
 \item Its rotation set is a singleton.  In the semi-irrational family
       one can take the representative $(\alpha,0)$, where $\alpha$ is
       strongly super-Liouville and of strong non-Brjuno type.  In the
       totally irrational family the rotation vector can be chosen
       strongly super-Liouville.
 \item It satisfies \eqref{eq:Cdelta} for some $C<\infty$, but it does
       not have bounded mean motion.
 \item It is not topologically conjugate to a skew product over a circle
       rotation.  In particular, no nonlinear change of coordinates can
       restore a fibred structure.
\end{enumerate}
For the special-flow examples, the rotation vectors may be chosen
explicitly as $(\alpha,0)$ and $(\alpha^2,\alpha)$, and the exponent is
sharp: no $(C',\delta')$ condition holds for
$0\leq\delta'<\delta$.

Every semi-irrational example satisfies the assumptions of Theorem~1 of
\cite{CWZ} with H\"older exponent $a=1$ and of Theorem~2 for every finite
$k\geq2$.  Every totally irrational example satisfies the assumptions
of Theorem~1.  Theorem~2 is, by definition, restricted to the
semi-irrational case.
\end{theorem}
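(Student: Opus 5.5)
The plan is to build everything from a time-one map of a smooth area-preserving flow on $\T^2$ that is a reparametrized linear flow, written as a special flow over a circle rotation. Concretely, for the semi-irrational family I fix a strongly super-Liouville $\alpha$ with very fast growing denominators $q_k$ and a lacunary function
\[
 \varphi(x)=1+\sum_k a_k\cos(2\pi q_k x),\qquad a_k\to0 \text{ superpolynomially},
\]
so that $\varphi$ is $C^\infty$ and positive. The time-$t$ map of the special flow over $R_\alpha$ under the roof $\varphi$ is smoothly conjugate to the time-$t$ map of a $C^\infty$ Lebesgue-preserving flow on $\T^2$ whose velocity field is proportional to a constant vector; this is the standard Kochergin/Fayad realization. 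The rotation vector is $t\cdot(\text{mean direction})/\int\varphi$. I would choose the parameters so that this vector is $(\alpha,0)$, respectively $(\alpha^2,\alpha)$ in the totally irrational case, where the linear direction is $(\alpha,1)$ suitably normalized. The Anosov--Katok realization would be handled in parallel. There I take $f=\lim H_n R_{\alpha_{n}} H_n^{-1}$ with $H_n$ shearing diffeomorphisms and $\alpha_n=p_n/q_n$ converging super-exponentially. I would control the displacement of $H_n$ at stage $n$ so that the deviation over times up to $q_{n+1}$ is of size $q_n^{\delta}$-compatible, and I would use the usual weak-mixing criterion of Fayad--Saprykina.

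For (iii), the deviation $D_n$ is governed by Birkhoff sums $S_N\psi=\sum_{j<N}\psi(x+j\alpha)$ of the zero-mean part $\psi=\varphi-1$. The relation is that the deviation at time $n$ is comparable to $S_{N(n)}\psi$, with $N(n)\asymp n$. For each mode I have $|S_N\cos(2\pi q_k\cdot)|\le\min(N,\,1/(2\|q_k\alpha\|))$. Choosing $a_k= q_k^{\delta}\|q_k\alpha\|$-scaled, more precisely with $a_k\|q_k\alpha\|^{-1}\asymp q_{k+1}^{\delta}$ and $a_k\asymp q_{k+1}^{\delta-1}$ up to constants, gives the following split. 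For $N\in[q_k,q_{k+1}]$, the modes $j<k$ contribute $\sum_{j<k}a_j/\|q_j\alpha\|\lesssim q_k^{\delta}$. The mode $k$ contributes $\le a_kN\le N^\delta$ because $N\le q_{k+1}$. The modes $j>k$ contribute $\le N\sum_{j>k}a_j\ll N^{\delta}$. This yields $\eqref{eq:Cdelta}$. Taking $N=\lfloor 1/(4\|q_k\alpha\|)\rfloor\asymp q_{k+1}$ at $x=0$, the $k$-th term is $\gtrsim q_{k+1}^{\delta}$ and dominates, which gives unbounded deviation and also sharpness for any $\delta'<\delta$. Smoothness of $\varphi$ needs $a_k q_k^{m}\to0$ for all $m$, i.e. $q_{k+1}^{\delta-1}q_k^m\to0$. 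This holds when $q_{k+1}\ge e^{q_k}$, which is exactly the strongly super-Liouville and non-Brjuno requirement in (ii). The singleton rotation set then follows from sublinear uniform deviation. Passing from $\R$-time to $\Z$-time and from the suspension to $\T^2$ coordinates only changes $D_n$ by bounded amounts.

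For (i), weak mixing of the time-$t$ map follows from the absence of eigenfunctions. I would follow Fayad's argument for lacunary roofs: along $q_k$ the Birkhoff sums $S_{q_k}\psi$ are not concentrated near constants, because the $k$-th mode makes $S_{q_{k+1}'}\psi$ equidistribute at scale $\asymp q_{k+1}^\delta\to\infty$ for suitable times. This precludes solutions of $e^{2\pi i s S_N\varphi}\approx$ const, which is what an eigenvalue would force. I expect this to be the main obstacle. It needs a quantitative distribution statement for the Birkhoff sums of a single large cosine mode, with the other modes as controlled errors, uniformly in the eigenvalue parameter. (iv) is then formal. A skew product $(x,y)\mapsto(x+\gamma,\Phi_x(y))$ has the continuous eigenfunction $e^{2\pi i x}$. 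A topological conjugacy transports it to a continuous eigenfunction of $f$, which is in particular an $L^2(\Leb)$ eigenfunction, contradicting weak mixing since $\gamma\notin\Q$ (if $\gamma\in\Q$, a power of $f$ would have a nonconstant invariant continuous function, again contradicting weak mixing of $f^m$). Continuum many conjugacy classes come from perturbing the sequence $(a_k)$ within a Cantor set of choices, which varies $\alpha$ or the rotation vector, a conjugacy invariant up to $GL_2(\Z)$ and translation, over uncountably many values. The hypotheses of Theorems~1 and~2 of \cite{CWZ} are read off from (ii), (iii), and the smooth (hence Lipschitz, $a=1$) regularity.
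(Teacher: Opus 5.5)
Your special-flow argument follows the paper closely: the lacunary roof with coefficients of order $\|q_k\alpha\|^{1-\delta}$, the early/resonant/late split of $S_N\psi$, the lower bound at $N\asymp\|q_k\alpha\|^{-1}$, the time-$1$ and time-$\alpha$ maps, and the continuous eigenfunction argument for (iv). The Anosov--Katok half of the theorem, however, is not proved.

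\textbf{Main gap: unbounded mean motion in the A--K families.} Controlling the displacement of $H_n$ only gives the upper bound \eqref{eq:Cdelta}. An A--K limit can perfectly well have bounded mean motion, for instance when the $H_n$ converge to a homeomorphism. Your explicit Birkhoff-sum lower bound has no counterpart there, so the failure of bounded mean motion in (iii) is never established for the A--K families. The missing idea is that weak mixing alone excludes bounded mean motion (Lemma~\ref{lem:wm-no-bmm}):
\begin{enumerate}[label=\textup{(\alph*)}]
 \item For a totally irrational vector, bounded mean motion gives a continuous semi-conjugacy to $R_{\rho(F)}$ by J\"ager's linearization theorem.
 \item For $\rho=(\alpha,0)$, bounded first-coordinate deviations give a measurable $\pi$ with $\pi\circ f=R_\alpha\circ\pi$, built from $\sup_{n\in\Z}(\pi_1F^n(z)-n\alpha)$.
\end{enumerate}
Either way you obtain a nonconstant eigenfunction. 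You already have weak mixing, so this link is exactly what closes (iii).

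Also, your A--K sketch uses scalar rationals $\alpha_n=p_n/q_n$, so it only yields the semi-irrational family. The totally irrational A--K family needs rational vectors $v_j$ chosen in nested balls of diameter below $\e^{-jq_j^2}/q_j$, avoiding the resonance lines $\{v:\langle k,v\rangle\in\Z\}$ for $0<|k|\leq j$.

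\textbf{The vector $(\alpha^2,\alpha)$.} In the special-flow part, (ii) is asserted for this vector but not checked.
\begin{enumerate}[label=\textup{(\roman*)}]
 \item Total irrationality needs $\alpha$ transcendental, which holds for Liouville $\alpha$.
 \item For the vector strong super-Liouville property, the natural times are $n=q_k^2$, where $\|q_k^2(\alpha^2,\alpha)\|_{\T^2}\lesssim q_k\|q_k\alpha\|$. So you need $\log(1/\|q_k\alpha\|)/q_k^2$ to be unbounded.
\end{enumerate}
The growth $q_{k+1}\geq\e^{q_k}$, which you call exactly the strong super-Liouville requirement, does not give this. It does not even give \eqref{eq:SSL}, which needs $\limsup_k\log q_{k+1}/q_k=\infty$. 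This is why the paper imposes $Q_{j+1}\geq Q_j\exp(jQ_j^2)$.

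\textbf{Weak mixing.} The times $q_k$ themselves are useless: for your coefficients $S_{q_k}\psi\to0$ uniformly. The effective times are multiples $mq_k\asymp\|q_k\alpha\|^{-1}$. There $\|mq_k\alpha\|\lesssim q_k^{-1}$, and the resonant mode has amplitude $\asymp a_k/\|q_k\alpha\|\to\infty$.

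The paper does not redo this distribution argument. It quotes the Fourier form of Katok's criterion (Proposition~\ref{prop:KT}). Its hypotheses hold for your roof because $\|q_k\alpha\|/a_k\asymp\|q_k\alpha\|^{\delta}\to0$, and by lacunarity the $q_k$-mode dominates all multiples of $q_k$.

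\textbf{Area normalization.} The reparametrized linear flow preserves $w\,\dd x\wedge\dd u$, not Lebesgue measure. You need Moser's theorem to pass to a Lebesgue-preserving conjugate. That conjugate is no longer a reparametrized linear flow, but the deviations change only by a bounded amount.
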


The weakly mixing requirement is essential for the last conclusion.  A
fixed nonlinear conjugate of a skew product may cease to look fibred in
the original coordinates, but it still has a nonlinear circle factor.
The constructions below instead eliminate all such factors.

\section{Two factor obstructions}

We isolate the two consequences of weak mixing which will be used in
both constructions.

\begin{lemma}[No skew-product conjugacy]\label{lem:no-skew}
Let $f$ preserve a full-support probability measure $\mu$ on $\T^2$ and
suppose that $(f,\mu)$ is weakly mixing.  Then $f$ is not topologically
conjugate to a skew product over a circle rotation.
\end{lemma}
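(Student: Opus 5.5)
We argue by contradiction and use the skew product's base circle to build a nonconstant eigenfunction. Suppose $h\colon\T^2\to M$ is a homeomorphism onto the total space $M$ of a circle bundle over $\T$, with $h\circ f=S\circ h$ and $S(x,y)=(x+\gamma,\Phi_x(y))$ as in \eqref{eq:skew-product}. Let $\pi\colon M\to\T$ be the bundle projection. Then $\pi\circ S=R_\gamma\circ\pi$, where $R_\gamma(x)=x+\gamma$. Setting $p=\pi\circ h\colon\T^2\to\T$, we obtain a continuous surjection with
\[
 p\circ f=R_\gamma\circ p .
\]
Hence $R_\gamma$ is a topological factor of $f$. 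Now define $\phi=\e^{2\pi i p}\colon\T^2\to\mathbb C$. It is continuous and of modulus one, and it satisfies $\phi\circ f=\e^{2\pi i\gamma}\phi$. Thus $\phi$ is an $L^2(\mu)$ eigenfunction of the Koopman operator of $(f,\mu)$, with eigenvalue $\e^{2\pi i\gamma}$.

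Next we show that $\phi$ is not $\mu$-a.e.\ constant, and this is where full support enters. The map $p$ is surjective onto $\T$, so $\phi$ takes every value on the unit circle. Since $\phi$ is continuous, if $\phi$ were equal to a constant $c$ on a set of full $\mu$-measure, then $\{\phi\neq c\}$ would be a nonempty open $\mu$-null set. This contradicts $\operatorname{supp}\mu=\T^2$. So $\phi$ is a nonconstant eigenfunction.

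It remains to use weak mixing. Weak mixing is equivalent to the Koopman operator having no eigenfunctions other than constants; equivalently, $1$ is the only eigenvalue and it is simple. This already contradicts the previous paragraph.

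If one prefers to allow only the rational-versus-irrational dichotomy, the argument closes either way. If $\gamma$ is irrational, $\e^{2\pi i\gamma}\neq1$ is a nontrivial eigenvalue, which weak mixing forbids. If $\gamma$ is rational, say $\gamma=p'/q$, then $\phi\circ f^q=\phi$. Weak mixing of $f$ implies ergodicity of $f^q$, so $\phi$ would be a.e.\ constant, which we have excluded. Since weak mixing is a measure-theoretic property of $(f,\mu)$ itself, no care is needed about transporting $\mu$ through $h$.

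The main point requiring care is the full-support step: without it, the continuous factor map could be a.e.\ constant relative to $\mu$. The remark about homotopy class and regularity of $h$ is handled automatically, because only continuity and surjectivity of $p$ are used.
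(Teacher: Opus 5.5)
Your proof is correct and follows the paper's argument: you pull back the base coordinate of the skew product to obtain the continuous eigenfunction $\e^{2\pi i\,\pi\circ h}$ with eigenvalue $\e^{2\pi i\gamma}$, use full support of $\mu$ to see that it is nonconstant in $L^2(\mu)$, and so contradict weak mixing. Your extra case split on rational $\gamma$ is harmless but unnecessary, since weak mixing already excludes nonconstant eigenfunctions for every eigenvalue, including $1$.
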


\begin{proof}
Suppose that a homeomorphism $K$ satisfies
\[
 KfK^{-1}(x,y)=(x+\gamma,\Phi_x(y)).
\]
Then
\[
 u(z)=\exp\bigl(2\pi i\,\pi_1(Kz)\bigr)
\]
is a nonconstant continuous function and
\[
 u(fz)=\e^{2\pi i\gamma}u(z).
\]
Since $\mu$ has full support, $u$ is nonconstant in $L^2(\mu)$.  It is
therefore a nontrivial eigenfunction of the Koopman operator of $f$,
contrary to weak mixing.  Notice that nothing about the homotopy class,
smoothness, or measure behavior of $K$ was used.
\end{proof}

\begin{lemma}[Weak mixing forces unbounded mean motion]
\label{lem:wm-no-bmm}
Let $f\in\Homeo_0(\T^2)$ preserve a full-support probability measure and
be a pseudo-rotation.
\begin{enumerate}[label=\textup{(\alph*)}]
 \item If $\rho(F)$ is totally irrational and $f$ has bounded mean
       motion, then $f$ has a continuous factor onto the rigid
       translation $R_{\rho(F)}$.
 \item If $\rho(F)=(\alpha,0)$ with $\alpha\notin\Q$ and $f$ has bounded
       mean motion, then $f$ has a nontrivial measurable factor onto
       $R_\alpha$.
\end{enumerate}
Consequently, in either case weak mixing implies failure of bounded mean
motion.
\end{lemma}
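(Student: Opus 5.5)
The plan is to read bounded mean motion as boundedness of a continuous $\R^2$-valued cocycle over $f$, and to turn that bounded cocycle into a coboundary. Since $F$ commutes with integer translations, $\varphi(z)=D_1(F,z)=F(z)-z-\rho(F)$ is $\Z^2$-periodic and descends to a continuous function on $\T^2$. The deviations are its Birkhoff sums:
\[
 D_n(F,z)=\sum_{j=0}^{n-1}\varphi(f^jz),
 \qquad
 D_{n+1}(F,z)=\varphi(z)+D_n(F,Fz).
\]
So \eqref{eq:BMM} says exactly that all Birkhoff sums of $\varphi$ are uniformly bounded by some $M$. If we find $g\colon\T^2\to\R^2$ with $\varphi=g\circ f-g$, then $H(z)=z-g(z)$ satisfies $H(Fz)=H(z)+\rho(F)$. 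Moreover $H$ commutes with $\Z^2$-translations, so it descends to a map $h$ with $h\circ f=R_{\rho(F)}\circ h$.

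For (b), a measurable $g$ suffices, and I would build it from the invariant measure $\mu$. The averages $g_N=-\frac1N\sum_{n=1}^{N}D_n(F,\cdot)$ are bounded by $M$ in $L^\infty(\mu)$. Using the cocycle identity, $g_N\circ f-g_N=\varphi-\frac1N\bigl(D_{N+1}-D_1\bigr)$, whose error term is $O(M/N)$. A weak-$*$ limit point $g\in L^\infty(\mu)$ therefore satisfies $g\circ f-g=\varphi$ $\mu$-a.e. Only the first coordinate is needed. Set
\[
 u(z)=\exp\bigl(2\pi i\,(z_1-g_1(z))\bigr).
\]
Then $u\circ f=\e^{2\pi i\alpha}u$ $\mu$-a.e. and $|u|=1$. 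Because $\e^{2\pi i\alpha}\neq1$, $u$ cannot be a.e.\ constant, since a constant eigenfunction would force $u\circ f=u$. Hence $u$ is a nontrivial measurable eigenfunction, i.e.\ a measurable factor map onto $R_\alpha$. The same construction works in the totally irrational case. This already gives the final sentence of the lemma in both cases: weak mixing forbids nonconstant $L^2$ eigenfunctions, so bounded mean motion must fail. Full support is not even needed for this sentence.

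For (a), continuity of $g$ is required, and this is the main obstacle. A weak-$*$ limit is only measurable, and Gottschalk--Hedlund would give a continuous transfer function only if $f$ were minimal, which is not assumed. My first choice is to invoke Jäger's linearization theorem for toral homeomorphisms homotopic to the identity. It states that bounded mean motion with a totally irrational rotation vector yields a continuous semiconjugacy $h$ homotopic to the identity with $h\circ f=R_{\rho(F)}\circ h$. Such an $h$ has degree one and is therefore onto, so it is a genuine continuous factor.

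If a self-contained route is wanted instead, I would try the following. Take $g$ componentwise to be the lower semicontinuous envelope $\sup_{n\geq0}$ of the Birkhoff sums, then upgrade it to continuity using total irrationality. Total irrationality makes the closure of $\{n\rho(F)\}$ all of $\T^2$, which should force the fibres of $H$ to be uniformly bounded and the limit object to be continuous. I would rely on the cited theorem and keep this direct argument only as a sketch.
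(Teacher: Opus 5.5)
Your proof is correct. Part (a) follows the paper, but parts (b) and the final sentence take a genuinely different route.

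\textbf{Part (a).} Like the paper, you simply invoke J\"ager's linearization theorem. However, you quote it without its non-wandering hypothesis. That hypothesis is exactly where the lemma's full-support assumption is used: Poincar\'e recurrence for a full-support invariant measure makes $f$ non-wandering. The continuity upgrade in your closing ``self-contained'' sketch is the whole substance of that theorem. The sketch should therefore not be counted as an independent proof.

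\textbf{Part (b) and the final sentence.} The paper cites J\"ager's Proposition~2.1 and Remark~2.2, which build the measurable map from a supremum of first-coordinate deviations. Concretely, $\Psi(z)=\sup_{n\geq0}\bigl(\pi_1(F^nz)-z_1-n\alpha\bigr)$ is bounded, $\Z^2$-periodic and lower semicontinuous. It satisfies $\Psi-\Psi\circ f\geq\varphi_1$ everywhere. Equality holds almost everywhere for every invariant measure at once, since each such measure integrates $\varphi_1$ to zero.

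You instead obtain the transfer function in $L^\infty(\mu)$ as a weak-$*$ cluster point of Ces\`aro averages of $-D_n$. This is legitimate for two reasons. First, $g\mapsto g\circ f$ is the adjoint of $\psi\mapsto\psi\circ f^{-1}$ on $L^1(\mu)$, hence weak-$*$ continuous. Second, the error $\frac1N(D_{N+1}-D_1)$ tends to zero in norm.

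\textbf{What each route gives.} Your version is elementary and self-contained. Your remark that it works with the first coordinate whenever $\rho_1\notin\Q$ shows that the ``consequently'' clause needs neither part (a), nor J\"ager's Theorem~C, nor full support. The paper instead passes through the continuous factor of (a) in the totally irrational case. In return, the paper's route gives the stronger topological conclusion in (a), and in (b) a pointwise-defined transfer function that does not depend on the chosen measure.

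\textbf{One small addition.} To call $u$ a factor map onto $(\T,\Leb,R_\alpha)$, note that $u_*\mu$ is invariant under an irrational rotation and hence is Haar measure by unique ergodicity. This is not needed for the contradiction with weak mixing.
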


\begin{proof}
Part~(a) is J\"ager's conservative linearization theorem
\cite[Theorem~C]{Jager09}.  In part~(b), bounded mean motion implies
bounded deviation in the primitive direction $e_1$.  The construction in
\cite[Proposition~2.1 and Remark~2.2]{Jager09} gives a measurable map
$\pi\colon\T^2\to\T$ satisfying
\[
 \pi\circ f=R_\alpha\circ\pi.
\]
Equivalently, on the universal cover it is obtained from the bounded
family of first-coordinate deviations, for example through a supremum
of the functions
$\pi_1(F^n(z))-n\alpha$.  In either case, pulling back a nontrivial
character of the rotation factor gives a nonconstant measurable
eigenfunction, which is incompatible with weak mixing.
\end{proof}

\section{Arithmetic and a lacunary cocycle}

The explicit special-flow construction uses an irrational number which
also meets the arithmetic assumptions of \cite{CWZ}.  Let
$Q_j=10^{m_j}$, where $m_1\geq2$ and
\begin{equation}\label{eq:Q-growth}
 Q_{j+1}\geq Q_j\exp(jQ_j^2),
 \qquad Q_j\mid Q_{j+1}.
\end{equation}
For $\boldsymbol\sigma=(\sigma_j)_{j\geq1}\in\{1,3\}^{\N}$, put
\begin{equation}\label{eq:alpha-family}
 \alpha=\alpha_{\boldsymbol\sigma}
 :=\sum_{j=1}^\infty\frac{\sigma_j}{Q_j},
 \qquad
 \alpha^{(j)}=\sum_{k=1}^j\frac{\sigma_k}{Q_k}
 =\frac{p_j}{Q_j}.
\end{equation}
We increase $Q_1$ so that $0<\alpha<\tfrac14$ and define
\begin{equation}\label{eq:theta}
 \theta_j=\|Q_j\alpha\|_{\T}
 =Q_j(\alpha-\alpha^{(j)}).
\end{equation}
Then
\begin{equation}\label{eq:theta-bounds}
 \frac{Q_j}{Q_{j+1}}\leq\theta_j
 \leq4\frac{Q_j}{Q_{j+1}},
 \qquad
 \frac{\theta_{j+1}}{\theta_j}\longrightarrow0.
\end{equation}

\begin{lemma}\label{lem:arithmetic}
Every $\alpha$ in \eqref{eq:alpha-family} satisfies
\begin{equation}\label{eq:SSL}
 \liminf_{n\to\infty}\frac1n\log\|n\alpha\|_{\T}=-\infty
\end{equation}
and is of strong non-Brjuno type.  It is transcendental,
$(\alpha^2,\alpha)$ is totally irrational, and
\begin{equation}\label{eq:vector-SSL}
 \liminf_{n\to\infty}\frac1n
 \log\|n(\alpha^2,\alpha)\|_{\T^2}=-\infty.
\end{equation}
\end{lemma}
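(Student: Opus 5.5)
The plan is to test all the statements along the subsequence $n=Q_j$ (and $n=Q_j^2$ for the vector), using the bounds \eqref{eq:theta-bounds} together with the growth condition \eqref{eq:Q-growth}.

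\textbf{Super-Liouville property.} Take $n=Q_j$. Then $\|Q_j\alpha\|_{\T}=\theta_j\leq 4Q_j/Q_{j+1}\leq 4\exp(-jQ_j^2)$. Hence
\[
\tfrac1{Q_j}\log\|Q_j\alpha\|_{\T}\leq \tfrac{\log4}{Q_j}-jQ_j\to-\infty,
\]
which gives \eqref{eq:SSL}.

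\textbf{Strong non-Brjuno type.} I will take this to mean that the Brjuno-type sum diverges in a strong form, for instance $\limsup_k q_k^{-1}\log q_{k+1}=\infty$ for the continued-fraction denominators $q_k$. Since $\alpha^{(j)}=p_j/Q_j$ satisfies $|\alpha-p_j/Q_j|=\theta_j/Q_j<1/(2Q_j^2)$ for $j$ large, Legendre's theorem makes $p_j/Q_j$ a convergent of $\alpha$. If it is not in lowest terms, the reduced denominator $q\leq Q_j$ still gives a convergent, and the next denominator is at least $\asymp 1/(Q_j\theta_j)\geq Q_{j+1}/(4Q_j)$. Therefore
\[
\tfrac1q\log q_{\mathrm{next}}\geq \tfrac1{Q_j}\bigl(jQ_j^2-\log 4\bigr)\to\infty .
\]
If the paper's definition of strong non-Brjuno type is different (for example, in terms of $\log(1/\|q\alpha\|)/q$), the same estimate applies.

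\textbf{Transcendence.} The approximations $|\alpha-p_j/Q_j|\leq 4/Q_{j+1}\leq Q_j^{-\mu}$ hold for every $\mu$ once $j$ is large, because $Q_{j+1}\geq \exp(jQ_j^2)$. So $\alpha$ is a Liouville number, and Liouville's theorem shows it is transcendental. In particular $\alpha$ is irrational and not quadratic.

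\textbf{Total irrationality of $(\alpha^2,\alpha)$.} Suppose $k_1\alpha^2+k_2\alpha\in\Z$ with $(k_1,k_2)\neq0$. Then $\alpha$ would be a root of a nonzero integer polynomial of degree at most 2, contradicting transcendence.

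\textbf{The vector estimate \eqref{eq:vector-SSL}.} Take $n=Q_j^2$. Then $\|Q_j^2\alpha\|\leq Q_j\theta_j$. For the other coordinate, write $\alpha=p_j/Q_j+\epsilon$ with $\epsilon=\theta_j/Q_j$. Then
\[
Q_j^2\alpha^2=p_j^2+2p_jQ_j\epsilon+Q_j^2\epsilon^2 ,
\]
so
\[
\|Q_j^2\alpha^2\|\leq 2p_j\theta_j+\theta_j^2\leq 3Q_j\theta_j .
\]
Both coordinates are therefore $\leq 12\,Q_j^2\exp(-jQ_j^2)$. This gives
\[
\tfrac1{Q_j^2}\log\|Q_j^2(\alpha^2,\alpha)\|_{\T^2}\leq \tfrac{\log 12+2\log Q_j}{Q_j^2}-j\to-\infty .
\]
This is exactly why the growth condition \eqref{eq:Q-growth} was chosen with $Q_j^2$ in the exponent.

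\textbf{Main obstacle.} The only delicate point I expect is matching the precise definition of "strong non-Brjuno type" used in \cite{CWZ}. In particular, one must confirm that $Q_j$, or its reduced denominator, is a continued-fraction denominator whose successor is at least $\asymp Q_{j+1}/Q_j$. Legendre's criterion combined with the lower bound in \eqref{eq:theta-bounds} should handle this.
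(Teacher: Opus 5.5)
Your proposal is correct and follows the paper's proof essentially step for step. Both arguments test at $n=Q_j$ and $n=Q_j^2$, use Legendre's criterion to make $Q_j$ a continued-fraction denominator with successor of size at least about $\theta_j^{-1}$, invoke Liouville's theorem for transcendence, and expand $Q_j^2\alpha^2=(p_j+\theta_j)^2$. Two small points: $p_j/Q_j$ is automatically reduced, since $p_j\equiv\sigma_j\in\{1,3\}$ modulo $10$; and the successor denominator is bounded below by $(2\theta_j)^{-1}$, not by your intermediate expression $1/(Q_j\theta_j)$, though either bound suffices for the divergence.
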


\begin{proof}
Equations \eqref{eq:Q-growth}--\eqref{eq:theta-bounds} give
\[
 \frac1{Q_j}\log\|Q_j\alpha\|_{\T}
 =\frac1{Q_j}\log\theta_j\longrightarrow-\infty.
\]
Moreover, $p_j/Q_j$ is reduced and eventually satisfies Legendre's
criterion.  Hence $Q_j$ is a continued-fraction denominator.  If
$Q_j^+$ denotes the following denominator, then
$Q_j^+\geq(2\theta_j)^{-1}$, so
\[
 \frac{\log Q_j^+}{Q_j}\longrightarrow\infty.
\]
The nonnegative Brjuno series therefore diverges.  In particular,
$\alpha$ is Liouville and hence transcendental, so
$1,\alpha,\alpha^2$ are linearly independent over $\Q$.

Finally $Q_j\alpha=p_j+\theta_j$, and direct expansion gives
\[
 \|Q_j^2(\alpha^2,\alpha)\|_{\T^2}
 \leq C Q_j\theta_j.
\]
Dividing $\log(CQ_j\theta_j)$ by $Q_j^2$ and using
\eqref{eq:Q-growth} proves \eqref{eq:vector-SSL}.
\end{proof}

For $\boldsymbol t=(t_j)_{j\geq1}\in[1,2]^{\N}$ define
\begin{equation}\label{eq:h}
 h(x)=\sum_{j=1}^\infty
 b_j\cos(2\pi Q_jx),
 \qquad
 b_j=2^{-j}t_j\theta_j^{1-\delta}.
\end{equation}
Write $S_nh(x)=\sum_{\ell=0}^{n-1}h(x+\ell\alpha)$.

\begin{proposition}\label{prop:Birkhoff}
The function $h$ belongs to $C^\infty(\T)$, has zero integral, and
there is $C_\delta<\infty$ such that
\begin{equation}\label{eq:Birkhoff-upper}
 \sup_x|S_nh(x)|\leq C_\delta n^\delta
 \qquad(n\geq1).
\end{equation}
There are $N_j\to\infty$ and $x_j\in\T$ such that
\begin{equation}\label{eq:Birkhoff-lower}
 S_{N_j}h(x_j)\geq
 c_\delta2^{-j}\theta_j^{-\delta}\longrightarrow\infty.
\end{equation}
For every $0\leq\delta'<\delta$,
\begin{equation}\label{eq:Birkhoff-sharp}
 N_j^{-\delta'}S_{N_j}h(x_j)\longrightarrow\infty.
\end{equation}
The constants are uniform in $\boldsymbol t\in[1,2]^{\N}$.
\end{proposition}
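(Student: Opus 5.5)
The plan is to analyse each Fourier mode separately via the explicit geometric sum
\[
S_n\cos(2\pi Q_j\,\cdot)(x)=\operatorname{Re}\Bigl(\e^{2\pi iQ_jx}\frac{1-\e^{2\pi inQ_j\alpha}}{1-\e^{2\pi iQ_j\alpha}}\Bigr),
\]
whose modulus is at most $\min\bigl(n,\tfrac{1}{2\theta_j}\bigr)$, and more precisely at most $\min(n,\,|\sin(\pi n\theta_j)|/\sin(\pi\theta_j))$. We use $\|Q_j\alpha\|_\T=\theta_j$.

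\textbf{Smoothness and zero mean.} Zero integral is immediate, since there is no constant term. For smoothness, \eqref{eq:theta-bounds} and \eqref{eq:Q-growth} give $\theta_j\le 4Q_j/Q_{j+1}\le 4\exp(-jQ_j^2)$. Hence $b_jQ_j^k\le 2\cdot 4\,Q_j^k\exp(-(1-\delta)jQ_j^2)$, which is summable for every $k$. So all derivative series converge uniformly and $h\in C^\infty$, with bounds uniform in $\boldsymbol t$.

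\textbf{Upper bound.} We get
\[
|S_nh|\le\sum_j b_j\min\bigl(n,(2\theta_j)^{-1}\bigr)\le 2\sum_j 2^{-j}\theta_j^{1-\delta}\min\bigl(n,\theta_j^{-1}\bigr).
\]
The elementary inequality $\min(n,\theta^{-1})\le n^\delta\theta^{-(1-\delta)}$ holds: if $n\theta\le1$, then $n=n^\delta n^{1-\delta}\le n^\delta\theta^{-(1-\delta)}$, and otherwise $\theta^{-1}=\theta^{-\delta}\theta^{-(1-\delta)}\le n^\delta\theta^{-(1-\delta)}$. Each term is therefore at most $2^{1-j}n^\delta$, and summing gives \eqref{eq:Birkhoff-upper} with $C_\delta=2$. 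This is uniform in $\boldsymbol t$, and in fact does not even need $\delta<\tfrac12$.

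\textbf{Lower bound.} This is the main step. Take $N_j=\lfloor 1/(2\theta_j)\rfloor$, so that $N_jQ_j\alpha$ is roughly $\tfrac12$ mod $1$ (resonant for mode $j$), and choose $x_j$ so that the $j$-th term equals its modulus. That modulus is $\gtrsim\sin(\pi N_j\theta_j)/\sin(\pi\theta_j)\gtrsim\theta_j^{-1}$, so the $j$-th contribution is $\ge c\,2^{-j}\theta_j^{-\delta}$. The other modes must then be shown negligible.

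For $k>j$ we have $\theta_k\le 4Q_k/Q_{k+1}$, which is tiny, and we bound the contribution trivially by $b_kN_j$. Since $\theta_k^{1-\delta}\ll\theta_j$ by \eqref{eq:theta-bounds} and the super-exponential growth, the tail $\sum_{k>j}b_kN_j$ is $o(1)$.

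For $k<j$ the contribution is at most $\sum_{k<j}b_k(2\theta_k)^{-1}\le\sum_{k<j}\theta_k^{-\delta}$, a quantity depending only on $Q_1,\dots,Q_j$. We compare it with $2^{-j}\theta_j^{-\delta}\ge 2^{-j}(Q_{j+1}/4Q_j)^{\delta}\ge 2^{-j}4^{-\delta}\exp(\delta jQ_j^2)$. Meanwhile $\theta_k^{-1}\le Q_{k+1}/Q_k\le Q_j$ for $k<j$, so the low modes total at most $jQ_j^\delta$, which is negligible against $\exp(\delta jQ_j^2)2^{-j}$.

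This yields \eqref{eq:Birkhoff-lower} with, say, $c_\delta=c/2$ for large $j$, uniformly in $\boldsymbol t\in[1,2]^{\N}$ because $t_j\in[1,2]$. Divergence holds since $\theta_j^{-\delta}\ge\exp(\delta jQ_j^2)$ beats $2^j$.

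\textbf{Sharpness.} For \eqref{eq:Birkhoff-sharp}, use $N_j\asymp\theta_j^{-1}$:
\[
N_j^{-\delta'}S_{N_j}h(x_j)\gtrsim 2^{-j}\theta_j^{-(\delta-\delta')}\ge 2^{-j}\exp\bigl((\delta-\delta')jQ_j^2\bigr)\to\infty.
\]

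The only delicate point is controlling the low-frequency modes $k<j$. This works only because of the super-exponential gap in \eqref{eq:Q-growth}, so I would verify that chain of inequalities carefully.
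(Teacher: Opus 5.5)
Your proposal is correct and follows essentially the same route as the paper: per-mode geometric-sum bounds $\min\{n,\theta_j^{-1}\}$ give the upper estimate, and for the lower estimate you use a resonant time $N_j\asymp\theta_j^{-1}$ with $x_j$ aligned to the $j$th mode, the lower and higher modes being negligible by the growth condition \eqref{eq:Q-growth}. The differences are minor. Your termwise inequality $\min(n,\theta^{-1})\le n^\delta\theta^{-(1-\delta)}$ replaces the paper's split of the sum at the index $J$ with $\theta_J^{-1}\le n<\theta_{J+1}^{-1}$, which shows the upper bound needs no lacunarity and gives $C_\delta=2$; you also take $N_j=\lfloor(2\theta_j)^{-1}\rfloor$ instead of $\lfloor(4\theta_j)^{-1}\rfloor$, and you verify explicitly the low- and high-mode estimates that the paper only asserts.
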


\begin{proof}
For a fixed derivative order $r$, the $C^r$ size of the $j$th mode is
at most a constant times
\[
 2^{-j}Q_j^r\theta_j^{1-\delta},
\]
which is summable by \eqref{eq:Q-growth}.  Thus $h$ is smooth and its
mean is zero.  The geometric-sum bound
\begin{equation}\label{eq:geometric}
 \left|\sum_{\ell=0}^{n-1}
 \e^{2\pi iQ_j(x+\ell\alpha)}\right|
 \leq C_0\min\{n,\theta_j^{-1}\}
\end{equation}
gives
\[
 |S_nh(x)|\leq C_1\sum_{j\geq1}
 2^{-j}\theta_j^{1-\delta}\min\{n,\theta_j^{-1}\}.
\]
Choose $J$ with
$\theta_J^{-1}\leq n<\theta_{J+1}^{-1}$.  Strong lacunarity gives
\[
 \sum_{j\leq J}2^{-j}\theta_j^{-\delta}\leq C_2n^\delta,
 \qquad
 n\sum_{j>J}2^{-j}\theta_j^{1-\delta}\leq C_3n^\delta,
\]
which proves \eqref{eq:Birkhoff-upper}.

Set $N_j=\lfloor(4\theta_j)^{-1}\rfloor$.  Choose $x_j$ so that the
$Q_j$th geometric sum has positive maximal phase.  Its contribution is
at least $c_0 2^{-j}\theta_j^{-\delta}$.  The contributions of the
earlier modes and the later modes are respectively bounded by
\[
 C\sum_{k<j}2^{-k}\theta_k^{-\delta}
 \quad\text{and}\quad
 CN_j\sum_{k>j}2^{-k}\theta_k^{1-\delta}.
\]
Both are $o(2^{-j}\theta_j^{-\delta})$.  This proves
\eqref{eq:Birkhoff-lower}.  Since $N_j\asymp\theta_j^{-1}$,
\eqref{eq:Birkhoff-sharp} follows as well.
\end{proof}

\section{The controlled Anosov--Katok construction}

Let $R_v(z)=z+v$ on $\T^2$.  A rational $v$ gives a periodic
translation.  Let $(\mathcal P_j)$ be increasing finite algebras of
rational rectangles which generate the Borel sigma-algebra, and let
$\varepsilon_j\downarrow0$.

\begin{proposition}[Weakly mixing A--K step]\label{prop:AK-step}
Suppose $v_j\in\Q^2/\Z^2$, $H_j\in\Diff_0^\infty(\T^2,\Leb)$, and
\[
 f_j=H_jR_{v_j}H_j^{-1}.
\]
Given a $C^\infty$ neighborhood $\mathcal U$ of $f_j$, a neighborhood
$V$ of $v_j$, and a finite algebra $\mathcal P$, one can find
\begin{enumerate}[label=\textup{(\roman*)}]
 \item an area-preserving $h_{j+1}\in\Diff_0^\infty(\T^2)$ commuting
       with $R_{v_j}$,
 \item a rational vector $v_{j+1}\in V$ of arbitrarily large period,
       and a time $m_{j+1}$,
\end{enumerate}
such that, with $H_{j+1}=H_jh_{j+1}$,
\[
 H_{j+1}R_{v_{j+1}}H_{j+1}^{-1}\in\mathcal U
\]
and
\begin{equation}\label{eq:finite-mixing-test}
 \left|\Leb(A\cap f_{j+1}^{-m_{j+1}}B)
       -\Leb(A)\Leb(B)\right|<\varepsilon
 \qquad(A,B\in\mathcal P).
\end{equation}
The period and the closeness of $v_{j+1}$ may be required to satisfy
any additional finite list of lower and upper bounds.
\end{proposition}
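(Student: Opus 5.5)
We follow the standard weakly mixing Anosov--Katok scheme of Fayad--Saprykina, with the order of choices arranged so that the new conjugacy is fixed before the new rotation vector. Write $v_j=(p/q,p'/q)$ with common period $q$. The conjugacy $h_{j+1}$ will be a $1/q$-periodic area-preserving diffeomorphism in the horizontal variable, so that it commutes with $R_{v_j}$ after a preliminary linear normalization. Concretely, we first use an element of $SL_2(\Z)$ together with the fact that $R_{v_j}$ is periodic to reduce to the case $v_j=(p/q,0)$ up to a vertical rational shift. The conjugating matrix is absorbed into $H_j$, so the homotopy class is unchanged. We take $h_{j+1}$ of shear type,
\[
 h_{j+1}(x,y)=\bigl(x,\;y+\phi(qx)\bigr),
\]
where $\phi$ is a smooth $1$-periodic function whose graph contains many long monotone pieces of slope of order $s$. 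Such a map is a Lebesgue-preserving element of $\Diff_0^\infty(\T^2)$, and it commutes with $R_{(p/q,y_0)}$. To stretch in both directions one may compose with a second shear $(x+\psi(q y),y)$, using the same normalization applied to the other coordinate.

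Next we choose $v_{j+1}=v_j+(1/(qQ),\,\eta)$ with $Q$ large and $\eta$ rational and tiny, so that $v_{j+1}\in V$ and its period is as large as requested. Since $h_{j+1}$ commutes with $R_{v_j}$, we have
\[
 H_{j+1}R_{v_{j+1}}H_{j+1}^{-1}=H_jh_{j+1}R_{v_{j+1}-v_j}h_{j+1}^{-1}H_j^{-1}R_{v_j}.
\]
Its $C^r$ distance to $f_j$ is bounded by $C(\|H_j\|_{C^{r+1}},\|h_{j+1}\|_{C^{r+1}})\,|v_{j+1}-v_j|$. Taking $|v_{j+1}-v_j|$ small after $h_{j+1}$ is fixed places the map in $\mathcal U$; this gives the upper bounds on closeness. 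Any lower bounds on the period are obtained by taking $Q$ large at the same time.

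For the mixing test we take $m_{j+1}$ so that $m_{j+1}(v_{j+1}-v_j)$ is a shift of size of order $1/q$ in the horizontal direction, modulo the period, with a small vertical component. We then analyze $f_{j+1}^{m}=H_jh_{j+1}R_{mv_{j+1}}h_{j+1}^{-1}H_j^{-1}$. Since $mv_j$ is again a rotation commuting with $h_{j+1}$, this equals $H_jh_{j+1}R_{w}h_{j+1}^{-1}R_{mv_j}H_j^{-1}$ with $w=m(v_{j+1}-v_j)$. The map $h_{j+1}R_wh_{j+1}^{-1}$ sends $(x,y)$ to $(x+w_1,\,y+\phi(qx+qw_1)-\phi(qx))$, a vertical shear with large slope. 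It therefore maps short horizontal segments onto long, nearly vertical curves that are almost uniformly distributed. As in Fayad--Saprykina, we introduce a partition $\eta_{j+1}$ into thin horizontal strips of width much smaller than the scale of $H_j$. These strips approximate $H_j^{-1}\mathcal P$ up to measure $\varepsilon/10$. On each strip, $R_{mv_j}$ followed by the shear distributes the strip's mass almost uniformly in $y$ along nearly vertical segments. A Fubini argument over these segments then gives \eqref{eq:finite-mixing-test} for $H_j^{-1}A$ and $H_j^{-1}B$, and hence for $A,B$, because $H_j$ is a fixed area-preserving diffeomorphism.

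The main obstacle is the \emph{uniform equidistribution estimate}: the images of the small partition pieces under the shear must be $\varepsilon$-equidistributed with respect to a fixed finite family of sets. This has to hold at a scale fine enough to beat the distortion of $H_j$, yet with $\phi$ fixed before $Q$. We handle it by the order of choices. First $\varepsilon$ and $H_j$ determine the strip width and the required slope $s$, which fixes $\phi$ and $h_{j+1}$. Only then do we choose $Q$, $\eta$ and $m_{j+1}$. The constraint that $m_{j+1}|v_{j+1}-v_j|$ be of order one is compatible with $|v_{j+1}-v_j|\to0$, since $m_{j+1}$ is free.
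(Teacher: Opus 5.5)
Your proposal has a genuine gap at the mixing step: a conjugacy of the form $h_{j+1}(x,y)=(x,y+\phi(qx))$ cannot produce \eqref{eq:finite-mixing-test}. Both $h_{j+1}$ and $R_{v_{j+1}}$ act on the first coordinate by translation. Hence the continuous function $u=\e^{2\pi i\pi_1}\circ H_j^{-1}$, which is fixed before $h_{j+1}$, $v_{j+1}$ and $m_{j+1}$ are chosen, satisfies $u\circ f_{j+1}^m=\e^{2\pi i m v_{j+1,1}}u$ exactly. So $|\langle u\circ f_{j+1}^m,u\rangle|=1$ for all $m$, while $\int u\dd\Leb=0$. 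Once $\mathcal P$ approximates $u$ in $L^2$ and $\varepsilon$ is small relative to the number of atoms of $\mathcal P$, the test fails for every choice of $v_{j+1}$ and $m_{j+1}$. Your Fubini step hides this. The map $h_{j+1}R_wh_{j+1}^{-1}$ is a skew product over $x\mapsto x+w_1$ with rigid fibre rotations. A short horizontal segment therefore becomes a curve equidistributed in a single vertical circle, not in $\T^2$, and the argument actually yields
\[
 \Leb(A\cap f_{j+1}^{-m}B)\approx\int_{\T}a(x)\,b(x+w_1)\dd x,
\]
where $a$ and $b$ are the vertical-fibre measures of $R_{mv_j}H_j^{-1}A$ and $H_j^{-1}B$. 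This equals $\Leb(A)\Leb(B)$ only when these marginals are essentially constant. If you iterate the step with vertical shears only, every $H_j$ fixes the first coordinate. The limit is then a skew product over $x\mapsto x+\omega_1$, which is exactly the structure that Lemma~\ref{lem:no-skew} excludes for weakly mixing maps.

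What is missing is a transverse mechanism that destroys every invariant fibration. In Fayad--Saprykina the conjugacy is $g_n\circ\phi_n$. The $1/q_n$-periodic map $\phi_n$ acts on part of each cell as a rescaled quarter turn, so $\phi_nR_{m_n\alpha_{n+1}}\phi_n^{-1}$ carries suitable horizontal intervals to almost full vertical segments. The integer shear $g_n(x,y)=(x+Ny,y)$ then winds these segments horizontally. The paper's sketch instead redistributes small rectangles of a fundamental domain of the $\Z/q$-action. Your aside about composing with $s_2(x,y)=(x+\psi(qy),y)$ points the right way, but it enters neither your computation nor your order of choices. To make it work you would need three things:
\begin{enumerate}
\item $h_{j+1}=s_2\circ s_1$, with $s_1$ your vertical shear. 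In the other order, $s_2$ nearly commutes with $R_w$, whose vertical component is small, and essentially drops out.
\item $\psi$ chosen so that $\psi\bmod1$ pushes Lebesgue measure close to Lebesgue measure. This makes the vertical-fibre measures of $s_2^{-1}R_{mv_j}H_j^{-1}A$ and $s_2^{-1}H_j^{-1}B$ nearly constant.
\item $\phi$ chosen after $\psi$, so that its vertical displacements equidistribute modulo $1$ at a scale finer than the oscillation created by $s_2$.
\end{enumerate}
Separately, absorbing a nontrivial $\mathrm{SL}_2(\Z)$ matrix into $H_j$ takes it out of $\Diff_0^\infty(\T^2)$. The normalization is unnecessary anyway, since a $1/q$-periodic vertical shear already commutes with every translation in $\tfrac1q\Z^2$.
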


\begin{proof}[Source and construction principle]
This is the toral-translation form of the weakly mixing density step in
the Anosov--Katok method \cite[Section~5]{AK70}; see also
\cite[Sections~2.1--2.4]{FK04}.  A fundamental domain for the finite
cyclic action generated by $R_{v_j}$ is divided into small rectangles.
An area-preserving diffeomorphism, equal to the identity near their
boundaries, redistributes most of these rectangles.  It is extended to
the other translates by the commutation rule.  A sufficiently long
nearby rational orbit then visits the redistributed rectangles with the
frequencies required in \eqref{eq:finite-mixing-test}.  The
redistribution is chosen first and the new rational parameter second;
hence the latter can be taken arbitrarily close to $v_j$.  This last
freedom simultaneously gives $C^\infty$ convergence and any prescribed
finite-time shadowing estimates.  The construction can be performed
with conjugacies isotopic to the identity.  For a fixed circle-action
parameter, the corresponding quantitative weakly mixing construction is
carried out in \cite{FS05}.
\end{proof}

We now insert the quantitative deviation requirement.  The argument is
independent of whether the limiting vector is semi-irrational or totally
irrational.

\begin{proposition}[Quantitative slowdown]\label{prop:AK-slowdown}
For every $0<\delta<1$ the weakly mixing A--K construction can be arranged
so that its limit $f$ has a lift $F$ and a vector $\omega$ satisfying
\begin{equation}\label{eq:AK-Cdelta}
 \sup_z|F^n(z)-z-n\omega|\leq Cn^\delta
 \qquad(n\geq1).
\end{equation}
The limiting vector may be chosen either as $(\alpha,0)$ with $\alpha$
strongly super-Liouville and of strong non-Brjuno type, or as a totally
irrational strongly super-Liouville vector.
\end{proposition}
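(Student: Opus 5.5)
We will run the weakly mixing construction of \Cref{prop:AK-step} inductively, $f_j=H_jR_{v_j}H_j^{-1}$ with $H_j=h_1\cdots h_j$, and use the freedom in \Cref{prop:AK-step}(ii) (the period of $v_{j+1}$ and the size of $|v_{j+1}-v_j|$ may obey any finite list of bounds) to impose the deviation estimate one time window at a time. Fix lifts $\widetilde H_j$ of $H_j$ isotopic to the identity and the lift $\widetilde F_j=\widetilde H_j T_{v_j}\widetilde H_j^{-1}$, with $T_v$ the translation by a lift of $v$. For every $n$,
\[
 \widetilde F_j^{\,n}(z)-z-nv_j
 =\bigl(\widetilde H_j-\mathrm{id}\bigr)(\widetilde H_j^{-1}z+nv_j)
 -\bigl(\widetilde H_j-\mathrm{id}\bigr)(\widetilde H_j^{-1}z),
\]
so $\sup_z|\widetilde F_j^{\,n}(z)-z-nv_j|\leq 2M_j$, where $M_j=\|\widetilde H_j-\mathrm{id}\|_{C^0}$. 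This is bounded mean motion at stage $j$; the constant $M_j$ is fixed once $h_j$ is chosen, which happens before $v_j$ is finally chosen. This is the point of the ``redistribution first, rational parameter second'' order.

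The key steps, in order, are as follows. Choose the windows $[1,\tau_1],(\tau_1,\tau_2],\dots$ with $\tau_j\uparrow\infty$. At step $j+1$, $h_{j+1}$ and hence $M_{j+1}$ are already fixed. Take $\tau_{j+1}$ so large that $2M_{j+1}+1\leq \tau_{j+1}^{\delta}$. Only then choose $v_{j+1}$, so close to $v_j$ and with $f_{j+1}$ so $C^0$-close to $f_j$ that the following three conditions hold.
\begin{enumerate}[label=\textup{(\arabic*)}]
 \item $|v_{j+1}-v_j|\,\tau_{j+1}$ is small.
 \item $\sup_{n\leq\tau_{j+1}}\|\widetilde F_{k}^{\,n}-\widetilde F_{j+1}^{\,n}\|_{C^0}\leq 2^{-j}$ for all later $k$. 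This is achieved by requiring, at every later stage, closeness on the finitely many iterates up to $\tau_{j+1}$, with summable errors.
 \item $|\omega-v_{j+1}|\,\tau_{j+2}\leq 1$ is enforced at the next step.
\end{enumerate}
For $n\in(\tau_j,\tau_{j+1}]$ we then compare $F^n$ with $\widetilde F_{j+1}^{\,n}$, which costs at most $1$, and $n\omega$ with $nv_{j+1}$, which costs at most $n|\omega-v_{j+1}|\leq 1$. We also use $2M_{j+1}\leq\tau_j^{\delta}<n^{\delta}$. Here we need $M_{j+1}$ to be controlled by $\tau_j$ rather than $\tau_{j+1}$. We arrange this by choosing $\tau_j$ after $h_{j+1}$ is fixed: the order is $h_{j+1}$, then $\tau_j$, then $v_{j+1}$ close to $v_j$ on the scale of $\tau_j$. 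This is legitimate because \Cref{prop:AK-step} allows us to select $h_{j+1}$ before $v_{j+1}$. The result is \eqref{eq:AK-Cdelta} with $C=3$ plus a constant for the first finitely many $n$.

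For the arithmetic of the limit, the vectors $v_j$ are rationals approaching $\omega$ with errors tending to zero arbitrarily fast relative to their periods $q_j$. Taking $|\omega-v_j|\leq \exp(-j q_j^2)$ gives $\liminf\frac1n\log\|n\omega\|=-\infty$, which is the strongly super-Liouville condition. Keeping the second coordinate of every $v_j$ equal to $0$ gives the semi-irrational case $(\alpha,0)$. The strong non-Brjuno property follows from the Legendre-criterion argument of \Cref{lem:arithmetic}: the $q_j$ become continued-fraction denominators with $\log q_j^+/q_j\to\infty$. In the totally irrational case we choose $v_{j+1}$ so that at each stage one new rational relation $k\cdot\omega+l=0$ is excluded, which is an open condition preserved by small later perturbations. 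The $C^\infty$ convergence of $f_j$ to some $f\in\Diff_0^\infty(\T^2,\Leb)$ and weak mixing come from the standard A--K bookkeeping in \Cref{prop:AK-step} with $\varepsilon_j\to0$ and generating $\mathcal P_j$. The rotation set is the singleton $\{\omega\}$ by \eqref{eq:AK-Cdelta} itself.

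The main obstacle is the ordering in the first step. $M_{j+1}$ typically grows enormously, since $h_{j+1}$ must stretch on the scale of the fundamental domain of $R_{v_j}$, so the window in which stage-$(j+1)$ bounded deviation is affordable starts only at $\tau_j\gtrsim M_{j+1}^{1/\delta}$. For $n\leq\tau_j$ we must therefore use $F_j$, whose deviation is controlled by $M_j$. This is consistent only if $F^n$ stays close to $F_j^n$ up to time $\tau_j$, which forces $v_{j+1}$ to be chosen after $\tau_j$. We would check carefully that \Cref{prop:AK-step} permits this ordering, in particular that the mixing time $m_{j+1}$ can be taken larger than $\tau_j$. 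That is natural, since mixing happens only at long times, so it should not conflict with the closeness demanded up to time $\tau_j$.
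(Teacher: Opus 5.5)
Your proposal is correct and is essentially the paper's argument. The exact stage formula gives bounded deviation with a constant fixed once $h_{j+1}$ is chosen; the window start $\tau_j$ (the paper's $L_{j+1}$) is fixed after $h_{j+1}$ and before $v_{j+1}$; and all later rational parameters are taken so close that $F^n$ shadows the stage map up to the end of each window. The only difference is that the paper compares $F$ with $H_jR_\omega H_j^{-1}$, whose deviation from $m\omega$ is bounded for all $m$, so it does not need your drift term $n|\omega-v_{j+1}|$.

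One correction to the write-up: after your reordering, conditions (1) and (3) should read with $\tau_j$ and $\tau_{j+1}$ in place of $\tau_{j+1}$ and $\tau_{j+2}$. As written, they involve windows that are fixed only after $v_{j+1}$, respectively $v_{j+2}$, has been chosen. The re-indexed versions are exactly what your window estimate uses.
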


\begin{proof}
Run Proposition~\ref{prop:AK-step} with
$\mathcal P=\mathcal P_j$ and $\varepsilon=\varepsilon_j$, taking the
neighborhoods successively smaller.  Let $\omega=\lim_jv_j$ and put
\[
 \widehat f_j=H_jR_\omega H_j^{-1}.
\]
Choose lifts
\[
 \widetilde H_j(z)=z+\eta_j(z),
\]
where $\eta_j$ is $\Z^2$-periodic, and set
\[
 B_j=2\|\eta_j\|_\infty.
\]
For every $m\in\Z$ and $w=\widetilde H_j^{-1}z$,
\begin{equation}\label{eq:AK-stage-bound}
 \widehat F_j^m(z)-z-m\omega
 =\eta_j(w+m\omega)-\eta_j(w),
\end{equation}
so the left-hand side has norm at most $B_j$.

After $H_j$ has been chosen, choose an integer $L_j>L_{j-1}$ such that
\begin{equation}\label{eq:Lj}
 B_j+1\leq\tfrac14L_j^\delta.
\end{equation}
At the next parameter selections require
\begin{equation}\label{eq:AK-shadow}
 \max_{1\leq m\leq L_{j+1}}
 \|F^m-\widehat F_j^m\|_{C^0}\leq1.
\end{equation}
This is a legitimate look-ahead condition.  Indeed, for a fixed
conjugacy $H$,
\[
 d_{C^0}\bigl((HR_uH^{-1})^m,(HR_vH^{-1})^m\bigr)
 \leq C(H)m|u-v|.
\]
The new conjugacy commutes with the old rational translation.  Hence
the difference between two consecutive $\widehat f_j$'s has the same
form, with a finite constant depending on the already chosen
conjugacies.  Once $L_{j+1}$ is known, Proposition~\ref{prop:AK-step}
allows $v_{j+2}$ and all later parameters to be chosen so close that the
sum of the finite-time errors is at most one.  The same choices may be
made small in every $C^r$ norm, so $f_j$ and $\widehat f_j$ have the
same $C^\infty$ limit $f$.

For $L_j\leq m<L_{j+1}$, equations
\eqref{eq:AK-stage-bound}--\eqref{eq:AK-shadow} give
\[
 \sup_z|F^m(z)-z-m\omega|
 \leq1+B_j\leq\tfrac14L_j^\delta\leq\tfrac14m^\delta.
\]
Changing the constant handles the finitely many initial times and
proves \eqref{eq:AK-Cdelta}.  Uniform sublinear convergence also proves
that the rotation set is $\{\omega\}$.

For the semi-irrational construction, keep $v_j=(\alpha_j,0)$ and make
the successive denominators grow so rapidly that the limit $\alpha$
satisfies \eqref{eq:SSL} and the strong non-Brjuno condition.  This is
the usual circle-action version of the construction \cite{FS05}.

For the totally irrational construction, choose $v_j\in\Q^2$ and
nested rational balls whose diameters are smaller than
$\exp(-j q_j^2)/q_j$, where $q_jv_j\in\Z^2$.  At stage $j$ keep the
next ball away from the first $j$ rational resonance hyperplanes
\[
 \{v:\langle k,v\rangle\in\Z\},
 \qquad 0<|k|\leq j.
\]
The intersection vector $\omega$ is totally irrational and
\[
 \|q_j\omega\|_{\T^2}\leq\exp(-jq_j^2),
\]
which is more than the strong super-Liouville condition required in
\cite{CWZ}.  These arithmetic restrictions only shrink the parameter
neighborhood $V$ and are compatible with Proposition~\ref{prop:AK-step}.
\end{proof}

\begin{proof}[Completion of the A--K part of Theorem~\ref{thm:main}]
The finite correlation tests \eqref{eq:finite-mixing-test}, applied to
the generating algebras with errors tending to zero and preserved by
the future perturbations, imply weak mixing of $f$.  Proposition
\ref{prop:AK-slowdown} gives the singleton rotation set and the
$(C,\delta)$ estimate.  Lemma~\ref{lem:wm-no-bmm} gives failure of
bounded mean motion in both rotation types, and
Lemma~\ref{lem:no-skew} excludes conjugacy to every skew product over a
circle rotation.
\end{proof}

\section{The explicit special-flow construction}

We now use the same Fourier series \eqref{eq:h} to obtain a completely
explicit weakly mixing input.  The required spectral criterion is the
following standard consequence of the Katok criterion for special
flows; see \cite[Theorem~5.60]{KT06}.

\begin{proposition}[Fourier weakly mixing criterion]\label{prop:KT}
Let $g(x)=\sum_{n\neq0}\widehat g(n)\e^{2\pi inx}$ be real-valued,
$C^2$, and centered.  Suppose that for rational approximations
$p_j/q_j$ of $\alpha$,
\begin{equation}\label{eq:KT-criterion}
 \frac{q_j|\alpha-p_j/q_j|}
 {\sum_{k\geq1}|\widehat g(kq_j)|}\longrightarrow0,
 \qquad
 \frac{|\widehat g(q_j)|}
 {\sum_{k\geq1}|\widehat g(kq_j)|}\geq c>0.
\end{equation}
Then every positive roof $g_0+\varepsilon g$ gives a weakly mixing
special flow over $R_\alpha$.
\end{proposition}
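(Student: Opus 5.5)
The plan is to deduce the criterion from Katok's criterion for special flows over rotations, in the form of \cite[Theorem~5.60]{KT06}. I take that criterion in the following form: if $q_j\alpha\to0$ in $\T$ and the centred Birkhoff sums $S_{q_j}\varphi-q_j\int\varphi$ of the roof $\varphi$, after a suitable normalisation, do not asymptotically concentrate on a single value, then the special flow has no nontrivial eigenvalue, i.e.\ it is weakly mixing.

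The heuristic behind it is a rigidity argument. An eigenfunction with eigenvalue $s$ produces functions $\exp(2\pi i s\,S_{q_j}\varphi)$ that are close in $L^2$ to constants along the rigidity times $q_j$. This is incompatible with the $S_{q_j}\varphi$ spreading out. Since $\varphi=g_0+\varepsilon g$, we have $S_{q_j}\varphi-q_jg_0=\varepsilon S_{q_j}g$. So everything reduces to the distribution of $S_{q_j}g$ with respect to Lebesgue measure on $\T$.

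The first step is a Fourier decomposition of $S_{q_j}g$. Write
\[
A_j=\sum_{k\ge1}|\widehat g(kq_j)|,
\qquad
P_jg(x)=\sum_{k\neq0}\widehat g(kq_j)\e^{2\pi ikq_jx},
\]
so that $P_jg$ is $q_j^{-1}$-periodic. For a mode $n=kq_j$, I write $kq_j\ell\alpha=kq_j\ell(\alpha-p_j/q_j)$ mod $1$. The geometric sum $\sum_{\ell<q_j}\e^{2\pi in\ell\alpha}$ then equals $q_j$ up to an error $O\bigl(k q_j^2|\alpha-p_j/q_j|\bigr)$. For modes $n$ not divisible by $q_j$, the corresponding sum vanishes for the rational rotation $p_j/q_j$, and the perturbation to $\alpha$ is again controlled by $q_j|\alpha-p_j/q_j|$. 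To sum these errors over $n$ I would use the $C^2$ decay $|\widehat g(n)|\lesssim n^{-2}$, or else the Denjoy--Koksma-type bound already used in \eqref{eq:geometric}. The target is an estimate
\[
S_{q_j}g = q_jP_jg + E_j,
\qquad
\|E_j\|_{L^1}=o(q_jA_j).
\]
The first condition in \eqref{eq:KT-criterion} is designed to give exactly this.

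The second step is non-degeneracy of the main term. Set $u_j=P_jg/A_j$. Then $\|u_j\|_\infty\le 2$, and by the second condition in \eqref{eq:KT-criterion} the first Fourier coefficient of $u_j$, at frequency $q_j$, has modulus at least $c$. Hence $\|u_j-m\|_{L^2}\ge c$ for every constant $m$. Because $u_j$ is uniformly bounded, the laws $(u_j)_*\Leb$ have weak limit points, and every such limit is a non-Dirac probability measure on $[-2,2]$. Combining this with the first step, $S_{q_j}g/(q_jA_j)$ has the same non-Dirac limit laws. This is the non-concentration hypothesis of Katok's criterion, with normalising constants $q_jA_j$, and the criterion then gives weak mixing for every positive roof $g_0+\varepsilon g$ with $\varepsilon\neq0$.

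I expect the main obstacle to be the scale of the normalisation. Katok's criterion requires excluding $\exp\bigl(2\pi is\varepsilon\,q_jA_j\,u_j\bigr)\to\mathrm{const}$ in measure for every $s\neq0$. If $q_jA_j$ stays bounded, a non-Dirac limit law for $u_j$ rules this out immediately. If $q_jA_j\to\infty$, one instead needs that $u_j$ does not concentrate near a lattice $(s\varepsilon q_jA_j)^{-1}\Z$. I would first try to extract this from the structure of $u_j$ as a $q_j^{-1}$-periodic function with a dominant first harmonic: its distribution is close to that of a trigonometric polynomial in one variable, which has a density. If that fails, I would simply quote the exact formulation in \cite[Theorem~5.60]{KT06}, where this scaling issue is handled, and verify its hypotheses in the form obtained above.
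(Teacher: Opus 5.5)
\textbf{There is a genuine gap.} Along the times $q_j$ the Birkhoff sums are negligible, and the normalised form of Katok's criterion you rely on is false.

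\emph{The sums at time $q_j$ vanish.} Since $g\in C^2$, $|\widehat g(n)|\le\|g''\|_{L^1}/(4\pi^2n^2)$, hence $A_j\le Cq_j^{-2}$ and $q_jA_j\to0$. In the paper's application $q_jA_j\asymp 2^{-j}Q_j\theta_j^{1-\delta}$ is super-exponentially small. By your own Step~1, $\|S_{q_j}g\|_{L^1}\le 2q_jA_j+\|E_j\|_{L^1}\to0$. So $\exp(2\pi is\varepsilon S_{q_j}g)\to1$ in measure for every $s$, and rigidity along $q_j$ excludes no eigenvalue at all.

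Your case distinction is therefore backwards. The case $q_jA_j\to\infty$ never occurs, and the regime that always occurs is not ``immediate'' but hopeless along $q_j$. Even if $q_jA_j\to t_0>0$, a non-Dirac limit law would only confine the eigenvalues to a discrete group.

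\emph{Rescaling by a vanishing factor is not allowed.} Take the roof $1+\varepsilon(u-u\circ R_\alpha)$ with $u$ smooth and nonconstant. Then $S_{q_j}(u-u\circ R_\alpha)/\|q_j\alpha\|_{\T}\to\mp u'$, which has a non-Dirac law. Yet this flow is isomorphic to the suspension of $R_\alpha$ under the constant roof $1$ and has pure point spectrum. The paper itself offers nothing beyond the citation of \cite[Theorem~5.60]{KT06}, so quoting that result is legitimate. But the property you propose to feed into it, a non-Dirac limit law of $S_{q_j}g/(q_jA_j)$, does not imply weak mixing.

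\textbf{The missing idea: use longer rigidity times.} The first condition in \eqref{eq:KT-criterion} is not an error bound at time $q_j$; it is what permits much longer rigidity times. Put $\kappa_j=q_j|\alpha-p_j/q_j|/A_j\to0$. Fix $\lambda>0$ and let $n_j=m_jq_j$ with $m_j=\lfloor\lambda/(q_jA_j)\rfloor$, so that $n_jA_j\to\lambda$.
\begin{itemize}
\item[(a)] Rigidity survives: $\|n_j\alpha\|_{\T}\le m_j|q_j\alpha-p_j|\le\lambda\kappa_j/q_j\to0$.
\item[(b)] The main coefficient does not degrade. The phases $q_j\ell\alpha\equiv\ell(q_j\alpha-p_j)$, $0\le\ell<n_j$, stay within $n_j|q_j\alpha-p_j|\le\lambda\kappa_j$ of $0$. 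Hence $|\widehat{S_{n_j}g}(q_j)|\ge c\lambda-o(1)$.
\item[(c)] The $L^2$ norm stays bounded: $\|S_{n_j}g\|_{L^2}^2\le 2\lambda^2+o(1)$. The modes $\pm kq_j$ contribute at most $2n_j^2A_j^2$. The modes with $q_j\nmid n$ contribute $O(\lambda^2\kappa_j^2)$. For this use $|\e^{2\pi in\alpha}-1|\ge 2/q_j$ when $q_j\nmid n\le(2|q_j\alpha-p_j|)^{-1}$, together with $g'\in L^2$, and the $n^{-2}$ decay for larger $n$.
\end{itemize}

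Now suppose $s\neq0$ were an eigenvalue. The cocycle equation $\psi(x+\alpha)=\e^{2\pi is(g_0+\varepsilon g)(x)}\psi(x)$ and $\|n_j\alpha\|_{\T}\to0$ make $\exp(2\pi is\varepsilon S_{n_j}g)$ $L^2$-close to a unimodular constant. So its $q_j$th Fourier coefficient tends to $0$. But the bound $|\e^{i\phi}-1-i\phi|\le\phi^2/2$, together with (b) and (c), gives that coefficient modulus at least $2\pi|s\varepsilon|c\lambda-4\pi^2s^2\varepsilon^2\lambda^2-o(1)$. This is positive once $\lambda<c/(2\pi|s\varepsilon|)$, a contradiction. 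Because $\lambda$ is at your disposal, the lattice problem you anticipated never arises.
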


\begin{lemma}\label{lem:h-weak-mixing}
For every sufficiently small $\varepsilon>0$, the special flow over
$R_\alpha$ under
\begin{equation}\label{eq:roof}
 r(x)=1+\varepsilon h(x)
\end{equation}
is weakly mixing.
\end{lemma}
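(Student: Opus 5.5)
We will apply Proposition~\ref{prop:KT} with $g=h$, $g_0=1$, and the rational approximations $p_j/Q_j=\alpha^{(j)}$ from \eqref{eq:alpha-family}. First, $h$ is real, $C^\infty$ (hence $C^2$), and centered by Proposition~\ref{prop:Birkhoff}. Since $|h|\leq\sum_j b_j<\infty$, the roof $r=1+\varepsilon h$ is positive once $\varepsilon<(\sum_jb_j)^{-1}$; this is the meaning of ``sufficiently small''. The proof then reduces to verifying the two conditions in \eqref{eq:KT-criterion} along the sequence $q_j=Q_j$.

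The Fourier coefficients of $h$ are $\widehat h(\pm Q_k)=b_k/2$, and all other coefficients vanish. We determine the nonzero coefficients at multiples of $Q_j$. Because $Q_j\mid Q_k$ for $k\geq j$ by \eqref{eq:Q-growth}, every $Q_k$ with $k\geq j$ is a multiple of $Q_j$. For $k<j$ we have $Q_k<Q_j$, so $Q_k$ is never a positive multiple of $Q_j$. Hence
\[
 \Sigma_j:=\sum_{m\geq1}|\widehat h(mQ_j)|=\tfrac12\sum_{k\geq j}b_k .
\]
The lacunarity in \eqref{eq:theta-bounds}, $\theta_{k+1}/\theta_k\to0$, together with $t_k\in[1,2]$, makes the tail dominated by its first term. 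Indeed $b_{k+1}/b_k\leq 2^{-1}\cdot2\cdot(\theta_{k+1}/\theta_k)^{1-\delta}\to0$, so $\Sigma_j\leq b_j$ for large $j$. Consequently $|\widehat h(Q_j)|/\Sigma_j\geq\tfrac12$, which is the second condition.

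For the first condition, $Q_j|\alpha-p_j/Q_j|=\theta_j$ by \eqref{eq:theta}. The ratio is therefore
\[
 \frac{\theta_j}{\Sigma_j}\leq\frac{2\theta_j}{b_j}
 =\frac{2\cdot2^{j}\theta_j^{\delta}}{t_j}\leq 2^{j+1}\theta_j^{\delta}.
\]
By \eqref{eq:theta-bounds} and \eqref{eq:Q-growth}, $\theta_j\leq4Q_j/Q_{j+1}\leq4\exp(-jQ_j^2)$. Thus $2^{j}\theta_j^{\delta}\leq 4\cdot2^{j}\exp(-\delta jQ_j^2)\to0$ for any fixed $\delta>0$, since $Q_j\geq10^{2}$. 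Both conditions hold, and Proposition~\ref{prop:KT} gives weak mixing for every admissible $\varepsilon$, uniformly in $\boldsymbol t$ and $\boldsymbol\sigma$.

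The step I expect to need the most care is the identification of the multiples of $Q_j$ in the spectrum. It relies on the divisibility $Q_j\mid Q_{j+1}$ and on the absence of any contribution from the earlier modes $k<j$. The lower-order modes do not interfere in the criterion at all, so the only real quantitative input is the competition between $2^j$ and $\theta_j^\delta$. This is where the weight $2^{-j}$ in \eqref{eq:h} has to be beaten by the super-exponential smallness of $\theta_j$. The factor $2^{-j}$ is harmless only because $\delta>0$ is fixed; at $\delta=0$ the bound would fail.
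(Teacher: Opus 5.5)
Your proposal is correct and follows the paper's own route. You apply Proposition~\ref{prop:KT} with $q_j=Q_j$, use the divisibility $Q_j\mid Q_k$ for $k\geq j$ to identify the coefficients at multiples of $Q_j$, use lacunarity to get $\Sigma_j\asymp b_j$, bound $\theta_j/\Sigma_j\lesssim 2^j\theta_j^\delta\to0$, and get positivity of the roof by taking $\varepsilon$ small. One small fix: $2^j\exp(-\delta jQ_j^2)\to0$ holds because $Q_j\to\infty$ under \eqref{eq:Q-growth}, not merely because $Q_j\geq10^2$, which would not suffice for very small $\delta$.
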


\begin{proof}
Use $q_j=Q_j$.  Since $Q_j\mid Q_\ell$ for $\ell>j$, the positive
Fourier coefficients at multiples of $Q_j$ are exactly the $j$th mode
and a subfamily of the later modes.  Strong lacunarity gives
\[
 \sum_{k\geq1}|\widehat h(kQ_j)|
 =\frac{b_j}{2}(1+o(1)),
 \qquad
 \frac{|\widehat h(Q_j)|}
 {\sum_{k\geq1}|\widehat h(kQ_j)|}\longrightarrow1.
\]
Furthermore,
\[
 Q_j\left|\alpha-\frac{p_j}{Q_j}\right|=\theta_j,
\]
and hence
\[
 \frac{Q_j|\alpha-p_j/Q_j|}
 {\sum_{k\geq1}|\widehat h(kQ_j)|}
 \leq C2^j\theta_j^\delta\longrightarrow0.
\]
Proposition~\ref{prop:KT} applies.  Positivity follows by reducing
$\varepsilon$ if necessary.
\end{proof}

We realize this special flow smoothly on a torus.  Consider
\begin{equation}\label{eq:mapping-torus}
 M_\alpha=(\T\times\R)/((x,u+1)\sim(x+\alpha,u))
\end{equation}
and identify it with $\T^2$ by
\begin{equation}\label{eq:Lalpha}
 L_\alpha[x,u]=(x+\alpha u,u)\pmod{\Z^2}.
\end{equation}
Let $X_0$ generate vertical translation on $M_\alpha$.  Choose
$\chi\in C_c^\infty((0,1))$ with $\chi\geq0$ and
$\int_0^1\chi(u)\dd u=1$.  Put
\begin{equation}\label{eq:w}
 w([x,u])=1+\varepsilon\chi(u)h(x)>0,
 \qquad Y=w^{-1}X_0,
\end{equation}
and let $\psi^t$ be the flow of $Y$.  The first return map to $u=0$ is
$R_\alpha$ and its return time is \eqref{eq:roof}.  The smooth area form
\begin{equation}\label{eq:invariant-area}
 \Omega=w\,\dd x\wedge\dd u
\end{equation}
is invariant and has total mass one.

Let $p=[x,u]$, $0\leq u<1$.  Suppose the orbit up to time $t$ makes
$N=N_t(p)$ crossings and ends at $[x+N\alpha,u_t]$.  In the lift of
the coordinates \eqref{eq:Lalpha},
\begin{equation}\label{eq:SF-displacement}
 \widetilde\psi^t(z)-z=(N+u_t-u)(\alpha,1).
\end{equation}
The changed clock gives
\begin{equation}\label{eq:clock}
 t=N+u_t-u+\varepsilon A_t(p),
\end{equation}
where
\begin{equation}\label{eq:endpoint-error}
 |A_t(p)-S_Nh(x)|\leq2\|h\|_\infty.
\end{equation}
Consequently,
\begin{equation}\label{eq:SF-exact}
 \widetilde\psi^t(z)-z-t(\alpha,1)
 =-\varepsilon A_t(p)(\alpha,1).
\end{equation}
Since $N\leq C(1+t)$, Proposition~\ref{prop:Birkhoff} yields
\begin{equation}\label{eq:SF-upper}
 \sup_z|\widetilde\psi^t(z)-z-t(\alpha,1)|
 \leq C(1+t^\delta).
\end{equation}

Fix $\tau>0$ and let $g_\tau=\psi^\tau$.  Equation
\eqref{eq:SF-upper} at $t=n\tau$ proves the $(C,\delta)$ estimate and
shows that the rotation set is
\begin{equation}\label{eq:SF-rotation-natural}
 \rho(\widetilde g_\tau)=\{\tau(\alpha,1)\}.
\end{equation}
At the return times
\[
 t_j=S_{N_j}r(x_j)=N_j+\varepsilon S_{N_j}h(x_j),
\]
equation \eqref{eq:SF-exact} gives a deviation of size comparable to
$S_{N_j}h(x_j)$.  Choose $k_j\in\Z$ with
$|k_j\tau-t_j|\leq\tau/2$.  The displacement during a bounded flow-time
interval is uniformly bounded.  Therefore the deviation of
$g_\tau^{k_j}$ differs by a bounded amount from the deviation at $t_j$.
Equations \eqref{eq:Birkhoff-lower}--\eqref{eq:Birkhoff-sharp} show
that $g_\tau$ has unbounded mean motion and satisfies no smaller
deviation exponent.

The flow $\psi$ is weakly mixing by Lemma~\ref{lem:h-weak-mixing}.
Every nonzero time map of a weakly mixing flow is weakly mixing: an
eigenfunction for one time map would have spectral measure supported on
a discrete arithmetic set and would yield an eigenfunction for the
flow.  Hence every $g_\tau$, $\tau\neq0$, is weakly mixing.

For $\tau=1$, replacing the natural lift by its translate by $(0,-1)$
gives
\begin{equation}\label{eq:SF-semi}
 \rho(\widetilde g_1-(0,1))=\{(\alpha,0)\}.
\end{equation}
For $\tau=\alpha$,
\begin{equation}\label{eq:SF-total}
 \rho(\widetilde g_\alpha)=\{(\alpha^2,\alpha)\},
\end{equation}
which is totally irrational and strongly super-Liouville by
Lemma~\ref{lem:arithmetic}.

Finally, Moser's theorem \cite{Moser65} gives
$J\in\Diff_0^\infty(M_\alpha)$ with
$J_*\Omega=\dd x\wedge\dd u$.  Define
\begin{equation}\label{eq:SF-final}
 f_{\mathrm{SF},\tau}
 =L_\alpha Jg_\tau J^{-1}L_\alpha^{-1},
 \qquad \tau\in\{1,\alpha\}.
\end{equation}
Then $f_{\mathrm{SF},\tau}$ preserves Lebesgue area.  An
identity-isotopic conjugacy changes a lifted displacement cocycle by the
difference of two values of a bounded periodic function.  Thus it
preserves the singleton rotation set, the $(C,\delta)$ estimate, failure
of bounded mean motion, and sharpness of the exponent.  It also
preserves weak mixing.  Lemma~\ref{lem:no-skew} now excludes every
linear or nonlinear conjugacy to a skew product.  This completes the
special-flow part of Theorem~\ref{thm:main}.

\section{Compatibility with the rigidity hypotheses}

The exponent range $0<\delta<\tfrac12$ is the range used in
Theorems~1 and~2 of \cite{CWZ}.  All maps constructed above are smooth,
hence Lipschitz and $C^k$ for every finite $k$, and preserve Lebesgue
area.

In the semi-irrational case, the representative $(\alpha,0)$ is
irrational and satisfies the strong super-Liouville condition by
\eqref{eq:SSL}.  The denominator growth also gives the strong
non-Brjuno condition.  Thus these examples meet the assumptions of
Theorem~1 with H\"older exponent one and of Theorem~2 for every finite
$k\geq2$.

In the totally irrational case, the A--K parameter is chosen strongly
super-Liouville in Proposition~\ref{prop:AK-slowdown}, while the
special-flow vector $(\alpha^2,\alpha)$ has this property by
\eqref{eq:vector-SSL}.  These examples therefore meet the assumptions
of Theorem~1.  They are outside the statement of Theorem~2 solely
because that theorem assumes a semi-irrational rotation vector.

The examples demonstrate that the $(C,\delta)$ hypothesis in those
results does not conceal bounded mean motion.  They also demonstrate
that the phenomenon is not caused by an underlying skew-product
factor: weak mixing rules out such a factor even after an arbitrary
nonlinear coordinate change.

\section{Abundance}

Let $\cE_{\delta,\mathrm{AK}}^{\mathrm{SI}}$ and
$\cE_{\delta,\mathrm{AK}}^{\mathrm{TI}}$ denote the semi-irrational and
totally irrational A--K examples above, and define
$\cE_{\delta,\mathrm{SF}}^{\mathrm{SI}}$ and
$\cE_{\delta,\mathrm{SF}}^{\mathrm{TI}}$ analogously.

\begin{theorem}\label{thm:abundance}
For every $0<\delta<\tfrac12$ and each
$\mathrm X\in\{\mathrm{AK},\mathrm{SF}\}$,
\[
 |\cE_{\delta,\mathrm X}^{\mathrm{SI}}|
 =|\cE_{\delta,\mathrm X}^{\mathrm{TI}}|
 =2^{\aleph_0}.
\]
Each of the four families contains continuum many topological conjugacy
classes, all of which contain no skew product over a circle rotation.
\end{theorem}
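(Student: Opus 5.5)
The strategy is to count via a conjugacy invariant, namely the rotation vector up to the action of $\mathrm{GL}_2(\Z)$ and integer translations. This reduces the claim to exhibiting continuum many rotation vectors that remain pairwise inequivalent under that action.

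\textbf{Upper bound.} Every family is a subset of $\Diff^\infty(\T^2)$, which has cardinality $2^{\aleph_0}$. It therefore suffices to produce continuum many conjugacy classes in each family. Distinct classes force distinct maps, so the cardinality statement follows at once.

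\textbf{The invariant.} Suppose $f,g\in\Homeo_0(\T^2)$ are pseudo-rotations and $K$ is a homeomorphism with $KfK^{-1}=g$. Let $A\in\mathrm{GL}_2(\Z)$ be the action of $K$ on $H_1$. A lift $\widetilde K$ has the form $Az+\text{bounded}$, so $\widetilde K F\widetilde K^{-1}$ is a lift of $g$ with rotation vector $A\rho(F)$. Changing lifts adds an integer vector. Hence the orbit of $\rho(F)\bmod\Z^2$ under $\mathrm{GL}_2(\Z)$ is a conjugacy invariant. Each such orbit is countable, because $\mathrm{GL}_2(\Z)\ltimes\Z^2$ is countable. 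Consequently any family realizing continuum many rotation vectors automatically contains continuum many conjugacy classes.

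\textbf{Special-flow families.} The parameter $\boldsymbol\sigma\in\{1,3\}^{\N}$ gives an injective map $\boldsymbol\sigma\mapsto\alpha_{\boldsymbol\sigma}$, since the expansions \eqref{eq:alpha-family} are digit expansions in base $10$ with the digit $\sigma_j$ placed at position $m_j$. Every construction step holds for each such $\alpha$, with constants that depend on $\alpha$ only through $\theta_j$. The semi-irrational maps $f_{\mathrm{SF},1}$ have rotation vectors $(\alpha_{\boldsymbol\sigma},0)$. These form continuum many distinct vectors modulo $\Z^2$, because $0<\alpha<\tfrac14$. The totally irrational maps $f_{\mathrm{SF},\alpha}$ have rotation vectors $(\alpha^2,\alpha)$, and these are also distinct as $\boldsymbol\sigma$ varies. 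By the countability argument above, continuum many conjugacy classes survive. Alternatively one could vary $\boldsymbol t$, but varying $\boldsymbol\sigma$ is cleaner.

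\textbf{A--K families.} At each stage of Proposition~\ref{prop:AK-slowdown} the parameter neighborhood $V$ is open and contains infinitely many admissible rational vectors. I would choose two disjoint admissible balls for $v_{j+1}$ at every stage. This yields a binary tree of constructions indexed by $\{0,1\}^{\N}$, with pairwise distinct limit vectors $\omega$. For the semi-irrational case the vectors are $(\alpha,0)$ with distinct $\alpha$; for the totally irrational case they are distinct totally irrational vectors.

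\textbf{Main obstacle.} The look-ahead conditions \eqref{eq:AK-shadow} and the mixing tests must hold along every branch simultaneously. Since only two branches are created per stage, there are $2^j$ finite constraints at stage $j$. I expect to satisfy them by shrinking each ball further, which is permitted by the final sentence of Proposition~\ref{prop:AK-step}. The branch separation itself has to be maintained by insisting that all later balls stay nested inside the chosen disjoint ones.

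Finally, Lemma~\ref{lem:no-skew} applies to every member of each family, so none of these conjugacy classes contains a skew product over a circle rotation.
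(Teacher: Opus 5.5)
Your proposal is correct and follows essentially the paper's argument: the cardinality upper bound from continuous self-maps of $\T^2$, the $\mathrm{GL}(2,\Z)$-orbit of the rotation vector modulo $\Z^2$ as a conjugacy invariant with countable orbits, the parameter $\boldsymbol\sigma$ for the special-flow families, binary branching of the A--K parameter choices, and Lemma~\ref{lem:no-skew} for the final claim. The only differences are minor. In the semi-irrational case you use the countability argument, whereas the paper shows directly that distinct $\alpha\in(0,\tfrac14)$ give distinct conjugacy classes. Your worry about $2^j$ simultaneous constraints also disappears, because each node's look-ahead conditions restrict only the choices in its own subtree.
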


\begin{proof}
For the special-flow construction there are continuum many choices of
$\boldsymbol\sigma\in\{1,3\}^{\N}$ and, for each $\alpha$, continuum
many choices of $\boldsymbol t\in[1,2]^{\N}$.  The estimates and the
weakly mixing criterion are uniform over these choices.  For the A--K
construction, at every sufficiently late step Proposition
\ref{prop:AK-step} leaves at least two disjoint parameter neighborhoods
available.  Infinite binary branching gives continuum many limiting
parameters and maps while preserving all finite requirements.

If two homeomorphisms in $\Homeo_0(\T^2)$ are topologically conjugate,
their singleton rotation sets are related by the matrix in
$\mathrm{GL}(2,\Z)$ induced by the conjugacy.  Every
$\mathrm{GL}(2,\Z)$ orbit is countable.  Selecting one parameter from
each orbit therefore gives continuum many topological conjugacy classes
in the totally irrational families.  In the semi-irrational family,
if $A(\alpha,0)=(\alpha',0)$ modulo $\Z^2$ with
$0<\alpha,\alpha'<\tfrac14$, irrationality forces the relevant diagonal
entry of $A$ to be $\pm1$, and the chosen interval then forces
$\alpha'=\alpha$.  Thus distinct parameters already give continuum
many conjugacy classes.

There are at most continuum many continuous self-maps of the compact
metric space $\T^2$, giving the matching upper bound.  The final
statement follows from weak mixing and Lemma~\ref{lem:no-skew}.
\end{proof}

\section{Conclusion}

For every $0<\delta<\tfrac12$, the $(C,\delta)$-deviation condition is
strictly weaker than bounded mean motion even after one excludes all
systems carrying a hidden skew-product structure.  The controlled
Anosov--Katok construction gives weakly mixing semi-irrational and
totally irrational pseudo-rotations with a prescribed sublinear
all-time deviation scale.  The explicit special-flow construction goes
further: one lacunary smooth roof simultaneously satisfies the Fourier
criterion for weak mixing, the uniform $O(n^\delta)$ Birkhoff estimate,
and a sharp unbounded lower estimate.  After area normalization, both
methods yield smooth Lebesgue-preserving pseudo-rotations satisfying the
arithmetic hypotheses of the relevant rigidity theorems in \cite{CWZ},
but none is conjugate to a skew product by any linear or nonlinear
coordinate change.

\end{document}